\setlist[itemize]{noitemsep} 			
\theoremstyle{plain}
\newtheorem*{definition*}{Definition}
\newtheorem{theorem}{Theorem}
\newtheorem{proposition}[theorem]{Proposition}
\newtheorem{lemma}[theorem]{Lemma}
\theoremstyle{definition}
\theoremstyle{remark}
\newtheorem*{example*}{Example}
\newtheorem{remark}[theorem]{Remark}
\newtheorem{example}[theorem]{Example}
\newcommand{\R}{\mathbb{R}}				
\newcommand{\N}{\mathbb{N}}				
\newcommand{\F}{\mathcal{F}}				
\newcommand{\A}{\mathcal{A}}				
\newcommand{\U}{\mathcal{U}}				
\newcommand{\spa}{\mathrm{span}}
\newcommand{\cl}{\operatorname{cl}}
\newcommand{\interior}{\operatorname{Int}}
\providecommand{\keyword}[1]
{
  \small	
  \textbf{Keywords:} #1
}
\tikzset{degil/.style={
            decoration={markings,
            mark= at position 0.5 with {
                  \node[transform shape] (tempnode) {$\backslash$};
                  }
              },
              postaction={decorate}
}
}
\title[Local controllability implies controllability]{Local controllability does imply \\global controllability}
\date{\today}
\author{Ugo Boscain}
\address{Ugo Boscain, CNRS, Laboratoire Jacques-Louis Lions, Sorbonne Universit\'e, Universit\'e de Paris, Inria,  Boîte courrier 187, 75252 Paris Cedex 05 Paris, France}
\email{ugo.boscain@upmc.fr}
\author{Daniele Cannarsa}
\address{Daniele Cannarsa, Department of Mathematics and Statistics, University of Jyv\"askyl\"a, 40014 Jyv\"askyl\"a, Finland
}
\email{daniele.d.cannarsa@jyu.fi}
\author{Valentina Franceschi}
\address{Valentina Franceschi, Dipartimento di Matematica Tullio Levi-Civita, Università di Padova, Italy}
\email{valentina.franceschi@unipd.it}
\author{Mario Sigalotti}
\address{Mario Sigalotti, 
Sorbonne Universit\'e, Inria, CNRS, Laboratoire Jacques-Louis Lions (LJLL), F-75005 Paris, France
}
\email{mario.sigalotti@inria.fr}
\newcommand{\STLC}{\hyperlink{eq:STLC}{ST}}
\newcommand{\LLC}{\hyperlink{eq:LLC}{L}}
\newcommand{\STLLC}{\hyperlink{eq:STLLC}{STL}}
\begin{document}

\maketitle

\begin{abstract}
We say that a control system is locally controllable if the attainable set from any state $x$ contains an open neighborhood of $x$, while it is controllable if the attainable set from any state is the entire state manifold.
We show in this note that a control system satisfying  local controllability is  controllable. 
Our self-contained proof is alternative to the combination of two previous results by Kevin Grasse.
\end{abstract}
\bigskip\bigskip
{ \keyword{local controllability, small-time local controllability, controllability}}
 \bigskip
\setcounter{tocdepth}{1}

\section{Introduction}

Let $M$ be a connected finite-dimensional 
smooth manifold. 
Here we study the controllability properties of a 
system of the form 
	\begin{equation}\label{eqn:system}\tag{C}
	\dot x = F(x, u(t)), \qquad x\in M, 
	\end{equation}
	where $F:M\times U\to TM$, with $U\subset\R^{m}$ for some $m\in \N$ and $F$ smooth. 
We consider as set of admissible controls either 
 $\U_{pc}=\cup_{T\ge0}\{u:[0,T]\to U\mid \mbox{$u$ piecewise constant}\}$, or the family $\mathcal{U}_{\infty}=\cup_{T\ge0}L^\infty([0,T],U)$.
Given an admissible control $u\in \mathcal{U}$, where $\U$ is one of these two classes, we denote by $\phi(\cdot,x,u)$ the unique absolutely continuous maximal solution of 
\eqref{eqn:system} with initial condition $x$ at time $0$. 
The \textit{attainable set} from a point $x$ in $M$ for system \eqref{eqn:system} is
	\begin{equation}\label{eq:attainable}
	    \A_{x}	= \{\phi(T,x,u)\mid T\geq 0, ~ u\in\mathcal{U},\;\phi(\cdot,x,u)\  \mbox{is defined on }[0,T]\}.
	\end{equation}

	When $y\in \A_{x}$ we say that $y$ is \emph{attainable} or 
\emph{reachable from $x$}.
We say that system \eqref{eqn:system}  is \textit{locally controllable} if  the attainable set from any initial state $x$ in $M$ is a  neighborhood of $x$, i.e.,
	\begin{equation} 
	{\color{black} x\in \interior \A_{x},\qquad \forall x \in M,}
	\end{equation}
while it 
is said to be  \emph{controllable} if $\A_{x}=M$ for each $x$ in $M$. 
Observe that sometimes in the literature the expression   \emph{local controllability} is used with a different meaning (see, for example,  \cite[Definition~3.2]{Coron}).

The notion of local controllability has been studied extensively in the literature, especially in the stronger forms of \emph{small-time local controllability} 
(\STLC-local controllability) (for which the attainable set $\A_x$ is replaced by the set of points attainable from $x$ within an arbitrarily small positive time) 
 and \emph{localized local controllability}  (\LLC-local controllability)
(for which one considers the set of points attainable from $x$ by admissible trajectories that stay in an arbitrarily small neighborhood of $x$). 
A combination of the two constraints yields the notion of \emph{small-time localized local controllability} (\STLLC-local controllability).
Table~\ref{tab:schema} contains a scheme of the implications that can be directly deduced from the above definitions, and we refer to Section~\ref{s:discussion} for a detailed description of these different types of local controllability and the relations between them.

\begin{table}
{\small
\begin{tikzcd}[column sep=0pt]
& \text{\STLC-{local controllability}}   \arrow[rd,Rightarrow] && \\
\text{\STLLC-{local controllability}} \arrow[ru,Rightarrow] \arrow[rd,Rightarrow]&
&
\text{{local controllability}}
 \arrow[r,bend right,Rightarrow,"\text{Thm.~\ref{thm:thm-1}}"']  &\mbox{\quad controllability} \arrow[l,bend right,Rightarrow]  \\
& \text{\LLC-{local controllability}}  \arrow[ur,Rightarrow] &
\end{tikzcd}
}
\caption{\label{tab:schema} Relations between different types of local controllability. As discusses in Section~\ref{s:discussion}, the missing arrows cannot be added to the scheme.
The only arrow that needs to be justified here is the one representing the fact that local controllability implies controllability. This is the object of Theorem~\ref{thm:thm-1}.}
\end{table}

Folklore has it that controllability can be deduced from 
suitable 
versions of local controllability: for example, in \cite[Section~12.3]{parsons2007} it is stated ({without proof}) that \STLLC-local controllability implies controllability.
Another example are linear systems for which controllability is known to be equivalent to 
\STLC-local controllability.
The question whether {\STLC-local controllability implies controllability} for a more general control system was formulated for instance in \cite[Sec.~3]{MR689492}.

The purpose of this paper is to prove that the weakest version of local controllability is sufficient to deduce controllability,  {giving in particular a positive answer to the question just mentioned.}

\begin{theorem}\label{thm:thm-1}
	If system \eqref{eqn:system} is locally controllable, then  \eqref{eqn:system} is controllable.
\end{theorem}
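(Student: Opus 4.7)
The strategy is to show that $\A_x$ is both open and closed in the connected manifold $M$, hence equal to $M$. Openness is straightforward: if $y\in\A_x$, then local controllability at $y$ gives an open neighborhood $V\subset\A_y$, and by concatenation of trajectories $\A_y\subset\A_x$, so $V\subset\A_x$.

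The main work is closedness. My plan is to prove the stronger statement that, under local controllability, attainability is a \emph{symmetric} relation: $y\in\A_x$ implies $x\in\A_y$. Combined with reflexivity ($x\in\A_x$, by local controllability) and transitivity (from concatenation of trajectories), this would make attainability an equivalence relation whose classes coincide with the sets $\A_x$. Those classes are open by the openness step, and since $M$ is connected and partitioned into such open classes, there is a single class equal to $M$.

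To establish symmetry, given a trajectory $\gamma:[0,T]\to M$ with $\gamma(0)=x$ and $\gamma(T)=y$, I would analyze the set $S=\{t\in[0,T] : \gamma(t)\in\A_y\}$. One has $T\in S$ by local controllability at $y$, and $S$ is open in $[0,T]$: if $t_0\in S$ then $\A_{\gamma(t_0)}\subset\A_y$ is already an open neighborhood of $\gamma(t_0)$, so by continuity of $\gamma$ nearby $\gamma(t)$ also lie in $\A_y$. The target is $S=[0,T]$, which yields $0\in S$ and hence $x=\gamma(0)\in\A_y$.

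The main obstacle is to rule out a boundary point $s^*=\inf S>0$: at such a point $\gamma(s^*)\in\operatorname{cl}(\A_y)\setminus\A_y$, and local controllability at $\gamma(s^*)$ provides only an \emph{outgoing} neighborhood in $\A_{\gamma(s^*)}$, not a neighborhood from which $y$ can reach $\gamma(s^*)$; so $\gamma(s^*)\in\A_y$ does not follow directly. The remedy I would pursue is a compactness-plus-continuity argument to extract a uniform positive lower bound on the ``local controllability radius'' along the compact image $\gamma([0,T])$. With such a uniform radius in hand, one can subdivide $\gamma$ into sufficiently short segments so that each earlier point lies within the forward-reachable neighborhood of the next, and chaining these backward reversals yields $x\in\A_y$. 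Making this uniformization precise---possibly by combining openness of each $\A_{\gamma(t)}$ with a finite subcover and an iterated transitivity argument---is the principal technical difficulty of the proof.
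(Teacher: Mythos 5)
Your overall architecture (openness of attainable sets, symmetry of attainability, equivalence classes, connectedness of $M$) coincides with the paper's concluding argument, and you correctly locate the crux: ruling out a positive infimum $s^*$ of $S=\{t:\gamma(t)\in\A_y\}$. But the remedy you propose --- extracting a uniform positive lower bound on the ``local controllability radius'' along the compact image $\gamma([0,T])$ by compactness and continuity --- does not work, and this is precisely the pitfall illustrated in Figure~\ref{img:risali}. The quantity $z\mapsto \sup\{r>0 : B(z,r)\subset \A_z\}$ is positive at every point by hypothesis, but it is not known to be lower semicontinuous: the definition of local controllability provides, at each point, \emph{some} neighborhood with no quantitative control, and nothing prevents the sizes of these neighborhoods from collapsing to zero as $t\downarrow s^*$. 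A positive function on a compact set need not be bounded away from zero unless it is lower semicontinuous, so the finite-subcover argument cannot be completed; moreover, a finite subcover by sets $U_{z_i}\subset\A_{z_i}$ only tells you that points near $z_i$ are reachable \emph{from} $z_i$, not that $z_i$ is reachable from its neighbors, which is the direction you need. The step you flag as ``the principal technical difficulty'' is thus left genuinely open, and it is the entire content of the theorem.

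The paper closes this gap by a different mechanism. It first shows (Lemma~\ref{lem:approx}) that local controllability implies approximate controllability. Then, arguing by contradiction in Lemma~\ref{lem:salmone}, it reduces to the case where $x\notin\A_y$ while $\gamma(t)\in\A_y$ for all $t\in(0,T]$, and runs a Krener-type recursive construction: using transversality of suitable admissible vector fields to the submanifolds already built, it produces an $n$-dimensional embedded submanifold contained in the controllable set $\A_x^-$. Since $\A_x^-$ then has nonempty interior and $\A_y$ is dense, $y$ can be steered to $x$, a contradiction. To complete your outline you would need to replace the uniformization step with an argument of this kind (or invoke Grasse's results cited in the introduction); as written, the proof is incomplete at its decisive step.
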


We mention that
if \LLC-local controllability is known, then controllability can be shown with a simpler proof than what is proposed here for Theorem~\ref{thm:thm-1} {(see Appendix \ref{app})} but still does not follow immediately from the definitions, since reachability is not a symmetric property.
Indeed, the fact that one can reach an open neighborhood of a given initial state does not imply 
that any point in the neighborhood can be steered 
back to the initial state. 

Let us mention that it is hard to find testable conditions for local controllability to hold. Indeed, in the literature it is more common to find conditions for \STLC-local controllability since those can be deduced from Lie algebraic arguments (see, e.g., \cite{Krastanov2021} and references therein).
It should be noticed that such conditions do not usually provide 
local controllability at \emph{every} point, since they 
tipically
require that the point at which local controllability is studied is an equilibrium of one of the admissible vector fields. 
Finally, we note that the interest in \STLC-local controllability is motivated, for example, by its relation with the continuity of the optimal time function, as explained in \cite{MR872457}.

\subsection*{Key steps of the proof of Theorem \ref{thm:thm-1}}
System \eqref{eqn:system} is said to be \textit{approximately controllable} if all attainable sets are dense, i.e., 
\begin{equation*}
\cl\A_x=M, \qquad \forall~ x\in M,
\end{equation*}
where $\cl$ denotes the closure with respect to the topology of $M$.
The first step in the proof of of Theorem \ref{thm:thm-1} is to prove that local controllability implies approximate controllability.
\begin{lemma}\label{lem:approx}
If \eqref{eqn:system} is locally controllable, then it is approximately controllable.
\end{lemma}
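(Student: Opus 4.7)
The plan is to show that $\cl \A_x$ is both open and closed in $M$ for every $x \in M$; since it contains $x$ and $M$ is connected, this will force $\cl \A_x = M$. Closedness holds by definition and nonemptiness is immediate from $x \in \A_x$, so everything reduces to proving that $S := \cl \A_x$ is open.

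To that end, I would fix $y \in S$ and use local controllability at $y$ to produce an open neighborhood $W$ of $y$ with $W \subset \A_y$. The key claim is $W \subset S$. Given any $z \in W$, write $z = \phi(T, y, u)$ for some $T \geq 0$ and $u \in \U$. By continuous dependence of solutions of \eqref{eqn:system} on the initial condition for the \emph{fixed} control $u$, the map $y' \mapsto \phi(T, y', u)$ is defined and continuous on some open neighborhood of $y$. Hence, for every open neighborhood $V$ of $z$ there exists an open neighborhood $V'$ of $y$ such that $\phi(T, y', u) \in V$ whenever $y' \in V'$. Since $y \in \cl \A_x$, I can select $y' \in V' \cap \A_x$; concatenating a control steering $x$ to $y'$ with $u$ then places $\phi(T, y', u)$ in $\A_x \cap V$. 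Letting $V$ shrink to $\{z\}$ yields $z \in S$, proving the claim. Thus $W$ is an open neighborhood of $y$ contained in $S$, so $S$ is open.

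The only ingredient that really needs care is the continuous dependence of $\phi(T, \cdot, u)$ on its initial point for a fixed admissible control. For $u \in \U_{pc}$ this reduces to finitely many applications of the standard continuous dependence for smooth autonomous vector fields $F(\cdot, u_i)$ on the successive pieces; for $u \in \mathcal{U}_\infty$ it follows from the classical Carath\'eodory theory applied to $\dot x = F(x, u(t))$, whose right-hand side is smooth in $x$ and essentially bounded in $t$ along the nominal trajectory (which has compact image in $[0,T]$). I therefore expect no genuine obstacle beyond invoking this standard ODE fact and being slightly careful with concatenations in both control classes.
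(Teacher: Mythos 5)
Your proof is correct and takes essentially the same route as the paper's: show that $\cl\A_x$ is open by combining local controllability at $y$ with continuous dependence of $\phi(T,\cdot,u)$ on the initial condition and a concatenation of controls, then conclude by connectedness of $M$. The only cosmetic difference is that the paper invokes the local-diffeomorphism property of the flow where you use only its continuity, which suffices.
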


The proof of the lemma, presented in Section~\ref{sec:proofs}, relies on the regularity of the flow of \eqref{eqn:system} for a fixed control, and on the connectedness of $M$. 
The second key property is the following lemma. 
\begin{lemma}\label{lem:salmone}
 Assume that \eqref{eqn:system} is locally controllable. Then, for every $x$ and $y$ in $M$, \[ y\in \A_{x}\implies x\in \A_{y}.\]
\end{lemma}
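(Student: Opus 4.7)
The plan is to show $x \in \A_y$ for $y \in \A_x$ by propagating the property ``$x$ is attainable from here'' along a trajectory from $x$ to $y$. I fix an admissible trajectory $\gamma \colon [0, T] \to M$ with $\gamma(0) = x$ and $\gamma(T) = y$, and analyze the set
\[
S := \{t \in [0, T] : x \in \A_{\gamma(t)}\},
\]
aiming to show $S = [0, T]$, which at $t = T$ yields the desired conclusion.

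First I record two structural facts that follow from local controllability. Every forward attainable set $\A_p$ is open: if $q \in \A_p$, transitivity of reachability gives $\A_q \subseteq \A_p$, while local controllability at $q$ gives $q \in \interior \A_q$, hence $q \in \interior \A_p$. Consequently, the backward set $\A^{-1}_x := \{z \in M : x \in \A_z\}$ is closed, since its complement $\{z : x \notin \A_z\}$ is open: for such a $z$ and any $q \in \A_z$, the inclusion $\A_q \subseteq \A_z$ forces $x \notin \A_q$, so $\A_z$ is an open neighborhood of $z$ contained in the complement. Hence $S = \gamma^{-1}(\A^{-1}_x)$ is closed in $[0, T]$. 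It contains $0$ (since $x \in \A_x$), and is downward closed, because for $s \leq t$ the point $\gamma(t)$ is reachable from $\gamma(s)$, so $\A_{\gamma(t)} \subseteq \A_{\gamma(s)}$. Therefore $S = [0, t^*]$ for some $t^* \in [0, T]$.

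The central step is to show $t^* = T$, which I argue by contradiction, assuming $t^* < T$. Since $t^* \in S$, there exist a control $v$ and a time $S^* \geq 0$ with $\phi(S^*, \gamma(t^*), v) = x$. By continuous dependence of the flow on the initial datum, for $\eta > 0$ small the point $x_\eta := \phi(S^*, \gamma(t^* + \eta), v)$ lies in any prescribed neighborhood of $x$ and satisfies $x_\eta \in \A_{\gamma(t^* + \eta)}$. To derive a contradiction I must then produce an admissible trajectory from $x_\eta$ to $x$ itself: this would give $x \in \A_{\gamma(t^* + \eta)}$ and contradict the maximality of $t^*$. My plan is to combine local controllability at $x$ (which supplies an open neighborhood $V \subseteq \A_x$ of $x$ containing $x_\eta$ for small $\eta$), the density of $\A_{x_\eta}$ in $M$ provided by Lemma \ref{lem:approx}, and a careful chaining argument through nearby points to actually reach $x$ from $x_\eta$.

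I expect this closing step to be the principal obstacle, because it is essentially a local instance of the very statement being proved: a point near $x$ must be able to reach $x$. The naive use of local controllability at $x_\eta$ only produces a neighborhood reachable \emph{from} $x_\eta$, which need not contain $x$. Breaking this apparent circularity will require using jointly the global data at hand---openness of all forward attainable sets, closedness of $\A^{-1}_x$, and continuity of $\phi$---rather than any one ingredient in isolation. Once the step is completed, $S = [0, T]$, hence $T \in S$ and $x \in \A_y$, proving the lemma.
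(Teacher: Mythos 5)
Your reduction is sound as far as it goes: the openness of every $\A_p$, the closedness of the backward set $\A_x^{-}=\{z: x\in\A_z\}$, and the conclusion that $S=[0,t^*]$ are all correct, and this broadly parallels the paper's normalization to the case $\tau=0$ (there phrased via $\tau=\inf\{t:\gamma(t)\in\A_y\}$ rather than via the backward set). But your argument stops exactly where the lemma's real content begins. You candidly note that steering $x_\eta$ back to $x$ is ``essentially a local instance of the very statement being proved'' and announce that some combination of openness, closedness and continuity will break the circularity---without supplying that argument. This step is not a routine verification: the density of $\A_{x_\eta}$ provided by Lemma~\ref{lem:approx} only helps if $\A_x^{-}$ has nonempty interior, since a dense set need not meet a set with empty interior. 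Establishing $\interior\A_x^{-}\neq\emptyset$ is precisely the technical heart of the proof, and local controllability alone does not hand it to you (contrast with the \LLC{} case treated in the appendix, where it follows from a Krener-type argument directly).

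The paper closes this gap with a recursive transversality construction that exploits the contradiction hypothesis: after normalizing so that $x\notin\A_y$ while $\phi(t,x,u)\in\A_y$ for all $t\in(0,T]$, it builds embedded submanifolds $S_k(I_k)\subset\A_x^{-}$ of increasing dimension. Since these sets are disjoint from $\A_y$ while the trajectory immediately enters $\A_y$, some admissible vector field $F(\cdot,u(t_k))$ must be transverse to the current submanifold; flowing backward along it raises the dimension by one, eventually producing an $n$-dimensional piece of $\A_x^{-}$. Remark~\ref{rmk:open} (approximate controllability plus $\interior\A_x^{-}\neq\emptyset$) then yields $x\in\A_y$, the desired contradiction. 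Nothing in your proposal substitutes for this construction, so the proof is incomplete at its decisive step.
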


Lemma~\ref{lem:salmone} is proved by showing that trajectories of \eqref{eqn:system} can be retraced back by finding a control driving their endpoint to their starting point. 
More precisely, assume that $y$ is in $\A_{x}$ and consider a control $u$ such that $y=\phi(T,x,u)$. 
For $t$ in a left neighborhood of $T$, the point $\phi(t,x,u)$ can be reached from $y$ due to local controllability. 
By repeating this argument and concatenating the controls, one can find smaller and smaller $t\geq0$ such that $\phi(t,x,u)$ can be reached from $y$.
In order to reach $x=\phi(0,x,u)$ (i.e., to prove the lemma) one has to show that the sequence of times $t$ found following such a procedure eventually attains zero, unlike the situation depicted in  Figure~\ref{img:risali}.
\\
 
\begin{figure}\centering
\begin{tikzpicture}
    \draw (0, -0.05) node[inner sep=0] {\includegraphics[width=0.65\linewidth]{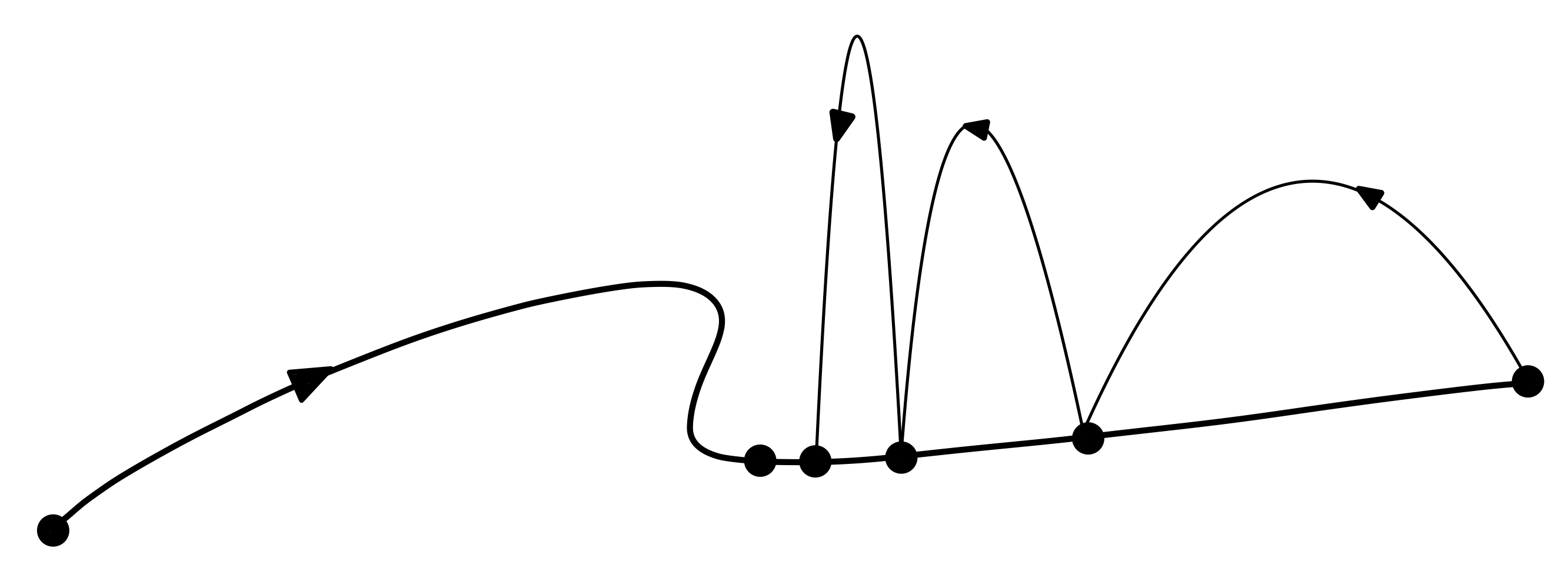}};
  \draw (-2, 0.3) node {$\phi(\cdot,x,u)$};
    \draw (0.1, -1.4) node {$z \phantom{\cdots}$};
    \draw (4.6, -0.9) node {$y$};
    \draw (-4.5, -1.7) node {$x$};
\end{tikzpicture} 
\caption{\label{img:risali}When retracing back the trajectory $\phi(\cdot,p,u)$, the attainable sets might get smaller and smaller and collapse to a point $z$ before reaching $x$, since a priori their size is not lower semi-continuous.  Lemma~\ref{lem:salmone} shows that this situation cannot happen, proving a key step for the proof of Theorem \ref{thm:thm-1}.}
\end{figure}

These arguments for the proof of Theorem \ref{thm:thm-1} hold for other classes of controls, provided that the control system remains well-posed in the space of absolutely continuous functions and that the set of controls contains the piecewise constant functions. 
{(For conditions of this type, see, e.g., \cite[Chapters 2 and 3]{bressan}.)}
\\

The fact that local controllability implies global controllability was already treated in \cite{MR692840} for a compact state manifold $M$. 
For the noncompact case, in \cite{MR774166} the author shows that local controllability implies global one for piecewise constant controls.
A related result in \cite{MR1346883} shows that  local controllability via bounded measurable controls implies the same via piecewise constant controls, therefore extending the previous result to control systems with bounded measurable controls.  We are grateful to Kevin Grasse for attracting our attention on these papers after a first version of our note was presented.

\subsection*{Acknowledgements}
This work was supported by the Grant 
{ANR-17-CE40-0007-01 QUACO}
of the French ANR.


\section{On the local forms of controllability} \label{s:discussion}

Let us define the set of points attainable from a point $x$ in $M$ at a time $t>0$ with trajectories of \eqref{eqn:system} remaining inside of a domain $\Omega$ by
\[
	\A_{x,\Omega}^{t}= \{\phi(t,x,u)\mid  u\in\mathcal{U},\;\phi(\cdot,x,u)\  \mbox{defined on }[0,t]\text{ with values in }\Omega
	\},
\]
and let 
\[
	\A_{x,\Omega}^{\leq T}= \bigcup_{0<t\leq T}\A_{x,\Omega}^{t},\qquad \A_{x,\Omega}= \bigcup_{0<t<+\infty}\A_{x,\Omega}^{t}.
\]
Moreover, let us denote $\A_{x}^{\le T}=\A_{x,M}^{\leq T}$ and notice that $\A_{x}=\A_{x,M}^{\leq +\infty}$.
System \eqref{eqn:system} is said to be small-time locally controllable if
\begin{equation}
\tag{\hypertarget{eq:STLC}{ST}-locally controllable}
x\in\interior\A_x^{\leq T} \qquad \forall\;T>0,\;\forall\; x\in M;
\end{equation}
moreover, system  \eqref{eqn:system} is said to be localized  locally controllable  if
\begin{equation}
\tag{\hypertarget{eq:LLC}{L}-locally controllable}
x\in\interior\A_{x,\Omega} \qquad \forall\; x\in M,\;\forall\;\Omega \mbox{ neigh.~of } x.
\end{equation}
Finally, system \eqref{eqn:system} is said to be small-time localized locally controllable if 
\begin{equation}
\tag{\hypertarget{eq:STLLC}{STL}-locally controllable}
x\in\interior\A_{x,\Omega}^T \qquad \forall\;T>0,\; \forall \;x\in M,\;\forall\;\Omega \mbox{ neigh.~of } x.
\end{equation}
One recognizes immediately that the implications contained in Table~\ref{tab:schema}  can be directly deduced from the above definitions, with the exception of Theorem \ref{thm:thm-1}. 
In this section we show that the missing arrows cannot be added to the scheme, provinding the examples summarised in Table \ref{tb:non-implicazioni}.

\begin{table}
\begin{tikzcd}[column sep=0pt
]
& \text{\STLC-{local controllability}}  
 \arrow[rd,Rightarrow,shift right =2,shorten >=3ex] 
\arrow[rd,Leftarrow,degil,shift left=2,shorten <=3ex,"\text{~~Ex.~\ref{rk:LnonST}}"] 
 \arrow[dd,Rightarrow,degil,shift left =2,"\text{~~Ex.~\ref{r:linear-LLC}}"] \arrow[dd,Leftarrow,degil,shift right=2,"\text{~~Ex.~\ref{rk:LnonST}}~~"']
& \\
\text{\STLLC-{local controllability}} \arrow[ru,Rightarrow,shift right =2,shorten <=3ex,] \arrow[ru,Leftarrow,degil,shift left=2,"\text{~~Ex.~\ref{r:linear-LLC}}~",shorten >=3ex,] \arrow[rd,Rightarrow,shift left =2,shorten <=3ex,]\arrow[rd,Leftarrow,degil,shift right=2,"\text{~~Ex.~\ref{rk:LnonST}}"',shorten >=3ex,] &
&
\text{{local controllability}}
 \\
& \text{\LLC-{local controllability}}  \arrow[ur,Rightarrow,shift left =2,shorten >=3ex] \arrow[ru,Leftarrow,degil,shift right=2,"\text{~~Ex.~\ref{r:linear-LLC}}"',shorten <=3ex]&
\end{tikzcd}

\caption{\label{tb:non-implicazioni}A diagram summarizing the fact that the weaker forms of local controllability do not imply the stronger.}
\end{table}

\begin{example}[\STLC-local controllability $\centernot\implies$\LLC-local controllability]\label{r:linear-LLC}
Recall that a linear control system $\dot x=Ax+Bu$ is controllable if and only if it is 
\STLC-locally controllable which, in turns, is equivalent to the
Kalman condition (see, e.g, \cite{MR1640001}). 
On the other hand, a control system is \LLC-locally controllable only if the range ${\rm Im}(B)$ of $B$  is the entire state space.
Indeed, 
if  $A\bar x$ points outside  ${\rm Im}(B)$ at some $\bar x\in  {\rm Im}(B)$, considering a linear system of coordinates associated with a basis containing $A\bar x$ and a set of generators for ${\rm Im}(B)$, we have that the component along $A\bar x$ of  every admissible trajectory staying in a sufficiently small neighborhood of $\bar x$ is increasing. Hence, a necessary condition for \LLC-local controllability is that $Ax$ is in ${\rm Im}(B)$ for every $x\in {\rm Im}(B)$. If such a condition is satisfied, every admissible trajectory starting from ${\rm Im}(B)$ cannot exit it. Therefore, 
 \LLC-local controllability can only hold 
when ${\rm Im}(B)$ is maximal. 
(See \cite{MR689492} for more results on controllability and local controllability of control-affine systems with unbounded controls.)

Hence any controllable linear system such that ${\rm Im}(B)$ is not maximal
is \STLC-locally controllable without being  \LLC-locally controllable.
This also proves that 
a \STLC-locally controllable system is not necessarily  \STLLC-locally controllable,
and that a locally controllable system is not necessary \LLC-locally controllable,
as indicated in Table~\ref{tb:non-implicazioni}. 

\end{example}

\begin{example}[\LLC-local controllability$\centernot\implies$\STLC-local controllability]\label{rk:LnonST}
We now present an example of a control system in $\R^{2}$ that is \LLC-locally controllable, but which fails to be \STLC-locally controllable at the origin. 
The example shows, in particular, that a \LLC-locally controllable system is not necessarily \STLLC-locally controllable and that
a locally controllable system is not necessarily  \STLC-locally controllable,
as indicated in Table~\ref{tb:non-implicazioni}. 
Let us define, for all $x=(x_{1},x_{2})
\in \R^{2}$,
\[
	X_{0}(x)=\begin{pmatrix}1\\0\end{pmatrix}, \qquad X_{1}(x)=\begin{pmatrix}-x_{2}\\x_{1}\end{pmatrix}, \quad \mbox{and}\quad X_{2}(x)=\begin{pmatrix}x_{1}\\x_{2}\end{pmatrix},
\]
and let us consider the control system 
\[
	x'=u_{0}X_{0}(x)+u_{1}X_{1}(x)+u_{2}X_{2}(x), \qquad u_{0}\in [0,1],~ u_{1},u_{2}\in [-1,1].
\]
An illustration of this system can be found in Figure \ref{img:fig-L-non-ST}. Outside of the origin this system is \STLLC-locally controllable, since the vector fields $X_{1}$ and $X_{2}$ are transversal and $u_1,u_2$ can take both positive and negative values. 
One can check that the maximal angular velocity is independent of the radius, and that 
the time needed to complete a semicircle is greater than or equal to $\pi$.

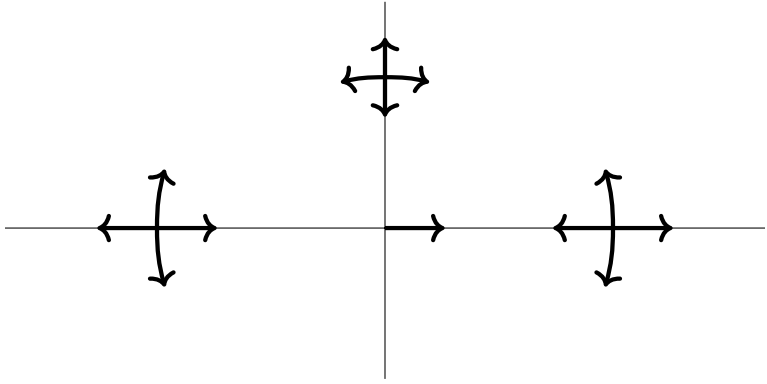
\begin{figure}
\begin{tikzpicture}
\draw
	(5,0) -- (-5, 0)
        (0,3) -- (0,-2);
\draw[->,ultra thick] 	(0,0) -- (0.785,0); %
\draw[->,ultra thick] 
	(3,0) to [out=90,in=-75] (2.898, 0.776);
\draw[->,ultra thick] 
	(3,0)	to [out=-90,in=+75] (2.898, -0.776);
\draw[->,ultra thick] 
	(3,0) -- (2.214, 0);
\draw[->,ultra thick] 
	(3,0) -- (3.785, 0); 			
\draw[->,ultra thick]				%
	(-3,0) to [out=90,in=-105] (-2.898, 0.776);
\draw[->,ultra thick] 
	(-3,0)	to [out=-90,in=105] (-2.898, -0.776);
\draw[->,ultra thick] 
	(-3,0) -- (-2.214, 0);
\draw[->,ultra thick] 
	(-3,0) -- (-3.785, 0);
\draw[->,ultra thick]				%
	(0,2) to [out=180,in=15] (-0.581, 1.932);
\draw[->,ultra thick] 
	(0,2)	to [out=0,in=165] (0.581, 1.932);
\draw[->,ultra thick] 
	(0,2) -- (0,2.523);
\draw[->,ultra thick] 
	(0,2) -- (0,1.476);
\end{tikzpicture}
\caption{\label{img:fig-L-non-ST}An illustration of the admissible vector fields of the control system in Example \ref{rk:LnonST}, which is \protect\LLC-locally controllable everywhere but is not \protect\STLC-locally controllable at the origin.}
\end{figure}
\end{example}


	\section{Proofs}\label{sec:proofs}
	
\subsection{Preliminaries} 
Let us observe that, fixed a control  $u\in\U$, the non-autonomous differential equation \eqref{eqn:system} is well-posed in the space of 
absolutely
continuous functions.
Precisely, for a given initial condition $x\in M$, there exist $T>0$ and a neighborhood $ W$ of $x$ such that $\phi(t,y,u)$ is defined for $(t,y)\in [0,T]\times  W$ and 
absolutely continuous with respect to $t$.
Moreover, for any $t\in [0,T]$ the flow $\phi(t, \cdot,u)$ restricted to $ W$ is a local diffeomorphism (see, e.g., 
\cite[Thm.~6.2]{jean:hal-01263668} or \cite[Thm.~1]{MR1640001}).

Let us denote by $\F=\{F(\cdot, u)\mid u \in U\}$ the family  of vector fields of $M$ parametrized by $F$. 
For a fixed smooth vector field $f$ in $\F$, we denote by $e^{tf}(y)$  the value at time $t$ of the trajectory  of $\dot x=f(x)$  starting from $y$, 
implicitly assuming that such a trajectory is indeed defined between $0$ and $t$.

Given a point $x$ in $M$, the \textit{controllable set to $x$} is the set of points which can be steered to $x$, i.e., 
	\[
	\A_{x}^{-} = \{y\in M \mid x\in \A_{y}\}.
	\]
Observe that  $\A^{-}_{x}$ is the attainable set  from $x$ for the control system defined by $-F$, whose solutions are the trajectories of \eqref{eqn:system} followed in 
the opposite time direction.

\begin{remark}\label{rmk:open}
Assume that \eqref{eqn:system} is approximately controllable. If a point $x$ in $M$ satisfies $\interior \A^{-}_{x}\neq \emptyset$, then $x$ can be reached from any other point. Indeed, for any $y$ in $M$, since $\A_{y}$ is dense in $M$ it intersects the interior of $A^{-}_{x}$.  Thus, there exists $z\in A_y\cap A_x^-$. Since $z\in \A_x$ and $z\in \A_y$, 
system \eqref{eqn:system} can be steered from $y$ to $x$  (see Figure \ref{img:fig-2}).

\begin{figure}
\begin{tikzpicture}
    \draw (0, 0) node[inner sep=0] {\includegraphics[width=0.8\linewidth]{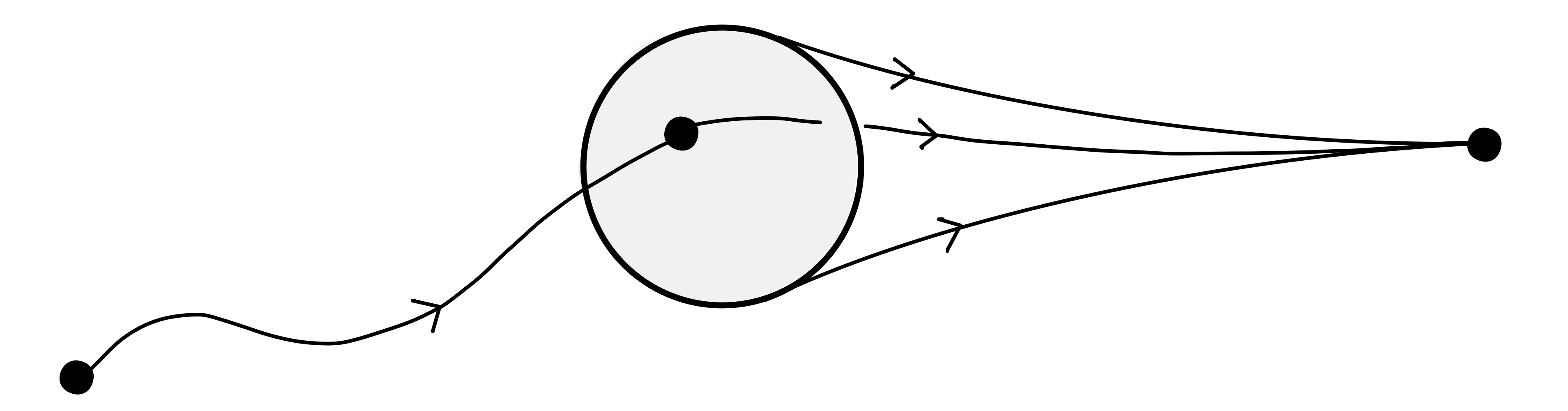}};
    \draw (-0.4, 0.3) node {$z$};
    \draw (-5.6,-1.2) node {$y$};
    \draw (5.5, 0.2) node {$x$};
\end{tikzpicture} 
	\caption{\label{img:fig-2} If \eqref{eqn:system} is approximately controllable and the controllable set to a certain state $x$ has nonempty interior, then $x$ can be reached from any other state in $M$. This is observed in Remark~\ref{rmk:open}.}
\end{figure}

\end{remark}

\subsection{Approximate controllability of locally controllable systems}
We are ready to prove Lemma~\ref{lem:approx}. 
\begin{proof}[Proof of Lemma~\ref{lem:approx}]
Let $x\in M$. We want to show that $\cl(\A_x)$ is open. By connectedness of $M$, this implies that $\cl(\A_x)=M$, thus proving the lemma.

Let $y\in\cl(\A_x)$. We claim that for every control $u\in \U$ and $t>0$ such that 
$\phi(t,y, u)$ is defined, 
we have that
\begin{equation}
    \label{eq:l2-claim}
   \phi(t,y, u)\in \cl(\A_x).
\end{equation} 
This concludes the proof of the lemma. Indeed, from \eqref{eq:l2-claim} it follows that  $\A_{y}\subset \cl(\A_x)$; since $\A_{y}$ 
contains $y$ in its interior
due to  local controllability,  this proves that $\cl(\A_x)$ is open. 

In order to prove \eqref{eq:l2-claim}, fix any neighborhood $V$ of $ \phi(t,y, u)$: we show that $V$  has nonempty intersection with $\A_x$.
Consider a neighborhood $W$ of $y$ such that the map $\varphi=\phi(t,\cdot,u)|_W$ is a diffeomorphism. 
In particular, $\varphi(W)$ is a neighborhood of  $\phi(t,y, u)$, and the set $W'=\varphi^{-1}(V \cap  \varphi(W))$ is a neighborhood of ${y}$. 
Since $y$ is in the closure of $\A_x$, there exists $y_1\in W'\cap \A_x$. 
Consider an admissible control steering \eqref{eqn:system} from $x$ to $y_1$: by concatenating such a control with $u$ one finds that $\phi(t,y_{1},u)$ is in $\A_x$. 
This implies that $\phi(t,y_{1},u)$ belongs to $V\cap \A_x$, proving that $V\cap \A_x$ is nonempty, as required.
\end{proof}

\subsection{Symmetry of attainable sets of locally controllable systems} 

\begin{proof}[Proof of Lemma \ref{lem:salmone}]
Let $x$ and $y$ in $M$ be such that $y\in\A_{x}$.  We argue by contradiction supposing that $x\notin\A_{y}$. 
We claim that this implies the existence of a point $z$ in $M$ (actually $z\in\A_{x}$) such that 
\begin{equation}\label{eq:claim}
z\notin \A_{y} \quad \text{and} \quad \interior \A_{z}^{-}\neq \emptyset.
\end{equation}
This yields a contradiction, since the assertions  in \eqref{eq:claim} cannot hold  both at the same time  due to Remark~\ref{rmk:open}.  (Notice that Remark~\ref{rmk:open} applies to system~\eqref{eqn:system} because of Lemma~\ref{lem:approx}.)
The rest of the proof is dedicated to proving the existence of a point $z$ satisfying \eqref{eq:claim}. 

Consider 
$u\in\mathcal U$ and $T>0$ such that $\phi(T,x,u)=y$. 
Define the absolutely continuous curve $ \gamma:[0,T]\to M$ by $\gamma(t)=\phi(t,x,u)$. 
Let 
\[
\tau = \inf\{t \in [0,T]\mid \gamma(t)\in \A_{y}\}.
\]
	We claim that $\gamma([0,T])\cap\A_{y} = \gamma((\tau,T])$ (see Figure~\ref{img:risali}).
	Indeed, $\gamma^{-1}(\gamma([0,T])\cap\A_{y})$ is open since $\A_{y}$ is open, and its complement is nonempty since it contains zero (we are assuming that $x\notin\A_{y}$).
	Moreover,	if a certain $s\in[0,T]$ satisfies $\gamma(s)\in \A_{y}$, then, for all $t$ in $[s,T]$, one has $\gamma(t)\in \A_{y}$ since it suffices to concatenate the control 
	steering \eqref{eqn:system}
	from $y$ to  $\gamma(s)$ with $u|_{[s,t]}$ in order to attain $\gamma(t)$. 
	Up to renaming $\gamma(\tau)$ as $x$, we can assume that $\tau = 0$. Namely, without loss of generality, we can assume 
	\[
	x\notin \A_{y} \quad \mbox{and}\quad 
	\phi(t,x,u) 
	\in \A_{y} \mbox{ for all } t\in (0,T]. 
	\]

	Let $V$ be a neighborhood of $x$ contained in $\A_{x}$.
We now construct a parametrization $C_n:I_n\to M$ ($I_n\subset\R^n$)  of a $n$-dimensional embedded submanifold of $M$ satisfying $C_n(I_n)\subset \A^{-}_{x}$, and therefore $x$ (or more exactly $\gamma(\tau)$) satisfies  \eqref{eq:claim}.
This will be done by a recursive argument, by constructing a finite sequence of embeddings  $C_k:I_k\to M$ ($I_k\subset\R^k$), $k=1,\dots,n$, with
	\begin{equation}\label{eq:iter}
	C_k(s)\in V \quad \text{and}\quad x\in \A_{C_k(s)},\quad \text{ for all } s\in I_k.
	\end{equation}

Let us begin with $k=1$. Let  $v\in U$ (constant) such that $F(x,v)\neq 0$
and denote $f_1=F(\cdot,v)$.
	 Let $I_{1}$ be an open interval of the form $(0,\delta_{1})$ such that the map $C_{1}\colon I_{1}\to M$ defined by $C_{1}(t)=e^{-tf_{1}}(x)$ parameterizes an embedded curve. 
	Since the constant control defined by $v$ belongs to $\U$, then one can reach $x$ from any point in $C_{1}(I_1)$.
	Moreover, one has that $C_1(I_1)$ is contained in $V$, up to choosing $\delta_{1}$ sufficiently small. Thus, $C_{1}$ satisfies  \eqref{eq:iter} for $k=1$.
	
	Now, suppose having constructed a $k$-dimensional parameterization $C_{k}$ satisfying~\eqref{eq:iter}, with $1\leq k\leq n-1$.
\begin{figure}
\begin{tikzpicture}
    \draw (-3, -0.05) node[inner sep=0] {\includegraphics[width=0.8\linewidth]{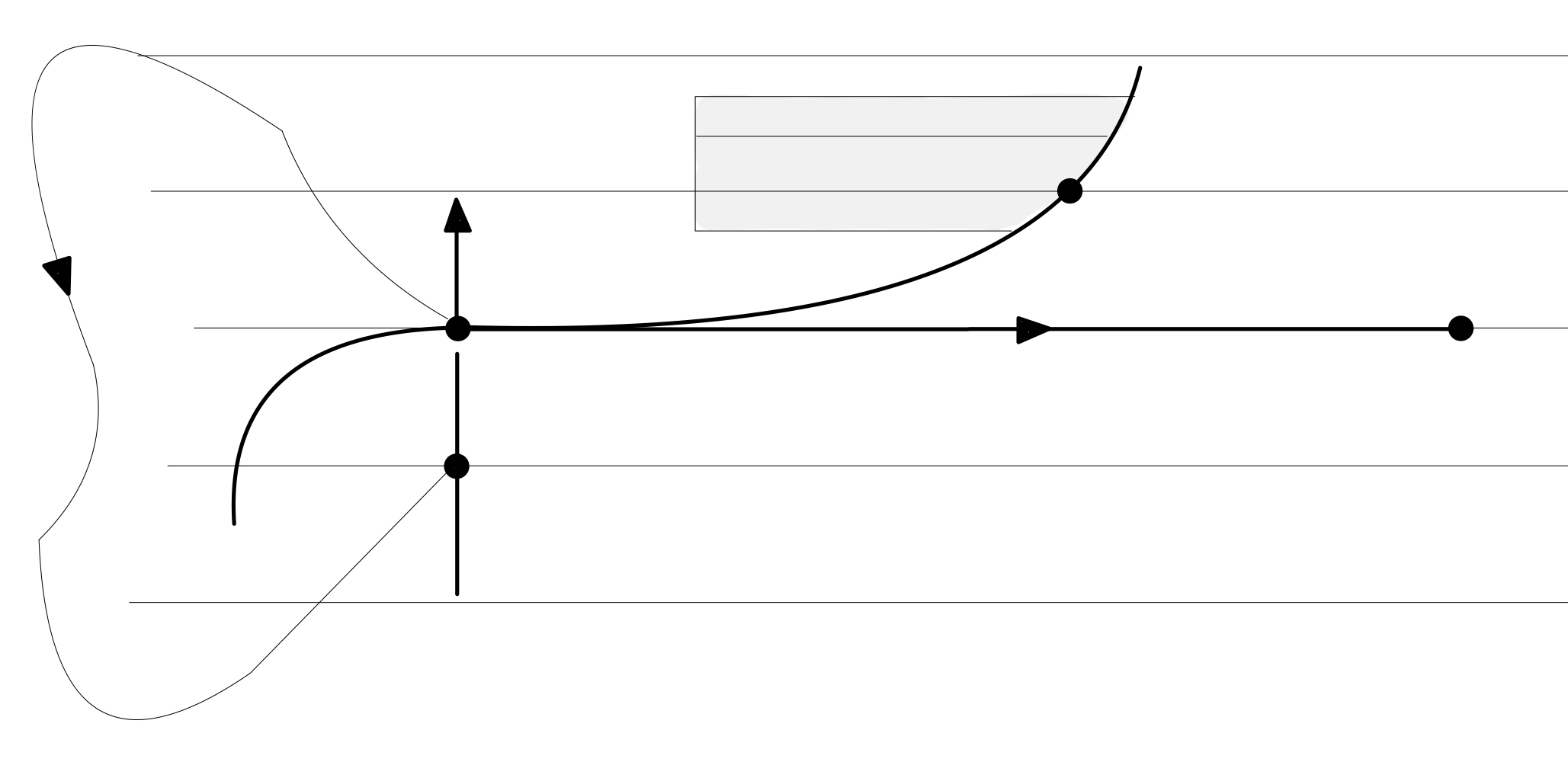}};
      \draw (5, 0.5) node {~};
    \draw (2, -0.1) node {$y$};
    \draw (-5.1, 0.1) node {$x$};
        \draw (-5, -0.9) node {$x_{1}$};
    \draw (-5.5, -1.9) node {$C_{1}(I_1)$};
    \draw (-9, -0.2) node {$\phi(\cdot,x,u_{1})$};
    \draw (-0.7, -0.1) node {$\phi(\cdot,x,u)$};
     \draw (-5.2, 1.8) node {$f_{v}$};
    \draw (-0.4, 1) node {$S_{1}(\sigma)$};
    \draw (-2.3, 1.5) node {$C_{2}(I_2)$};
\end{tikzpicture} 
	\caption{\label{img:salmone} A graphic representation of the iterations in the proof of Lemma \ref{lem:salmone}. 
	One can steer 
	any point in $C_{2}(I_2)$ to $x$ by first attaining $S_{1}(I_1)$, then attaining $C_{1}(I_1)$ via the control $u_{1}$ from which one can reach the initial state $x$.}
\end{figure}
	Fix a point $x_{k}\in C_{k}(I_{k})$, and consider a control $u_{k}$ in $\U$ and a time $T_{k}\geq0$ such that $x_{k}=\phi(T_{k},x,u_{k})$. (We are using here that $x_k\in V\subset \A_x$.) 
	Let $W_{k}$ be a neighborhood of $x$ such that $\varphi_{k}=\phi(T_{k},\cdot,u_{k})|_{W_k}$ is a diffeomorphism.
	Then $S_{k}\vcentcolon=\varphi_{k}^{-1}\circ C_{k}$ parameterizes an embedded submanifold of dimension $k$ containing $x$. 
	Moreover,
	\begin{equation}\label{eq:S_kx}
	x\in\A_{S_{k}(s)}, \qquad \forall s\in I_{k},
	\end{equation}
	since $x\in \A_{C_k(s)}$, and $C_k(s)\in \A_{S_k(s)}$ using $u_k$ as control. 
	In particular, we have that $S_k(s)\notin \A_y$ for all $s\in I_k$.
	As a consequence, since $\phi(t,x,u)\in \A_{y}$ for all $t\in (0,T]$, we have that
	\begin{equation}
	    \label{eq:intersez}
	S_{k}(I_{k})\cap \{\phi(t,x,u)\mid t\in (0,T] \} =\emptyset.
	\end{equation}
	This implies the existence of $t_{k}\in [0,T]$ and of $\sigma\in I_{k}$ with $S_k(\sigma)\in V$ 
	such that $F(S_{k}(\sigma),u(t_{k}))$ is transverse to 
	$T_{S_{k}(\sigma)}S_{k}(I_k)$.
	Indeed, if one had $F(S_{k}(s),u(t))\in T_{S_{k}(s)}S_{k}(I_k)$ for all $s\in I_k$ and $t\in[0,T]$, then, by uniqueness of solutions, $\phi(t,x,u)$ 
	would stay in 
	$S_k(I_k)$, at least for $t$ sufficiently small. However, this contradicts~\eqref{eq:intersez}. Moreover, $\sigma$ can be chosen so that $S_k(\sigma)$ is arbitrarily close to $x$, and in particular so that it belongs to $V$. Let $f_{k}$ be the vector field ${f_k=}F(\cdot,u(t_{k}))$.
	
	By transversality of ${f_k(S_k(\sigma))}$ and
	$T_{S_{k}(\sigma)}S_{k}(I_k)$,
	 there exist $\delta_{k+1}>0$ and an open neighborhood  $I_{k}'\subset I_{k}$ containing $\sigma$  such that the map $C_{k+1}:
	 I_{k}'\times (-\delta_{k+1},\delta_{k+1})\to M$ defined by
	\[
	C_{k+1}(s,t)=e^{tf_{k}}\circ S_{k}(s), \qquad \forall (s,t)\in I_{k}'\times  
	{
	(-\delta_{k+1},\delta_{k+1})}, 
	\]
	is a parametrization of an embedded submanifold of dimension $k+1$.
	Moreover, since $C_{k+1}(\sigma,0)=S_k(\sigma)\in V$, the set $
	 I_{k}'\times (-\delta_{k+1},\delta_{k+1})$ can be chosen so that $C_{k+1}(I_{k}'\times (-\delta_{k+1},\delta_{k+1}))\subset V$. 
	We are now left to observe that $x\in \A_{C_{k+1}(s)}$ for all 
	$s\in{I_{k+1}\coloneqq} I_k'\times(-\delta_{k+1},0)$.
	In fact, starting from $C_{k+1}(s)$ one can reach $S_{k}(I_k)$ using the (constant) control $u(t_k)$ corresponding to $f_{k}$, and $x$ can be reached from $S_{k}(I_k)$ by \eqref{eq:S_kx}. 
	This concludes the iteration, since $C_{k+1}{|_{I_{k+1}}}$ satisfies \eqref{eq:iter}.
\end{proof}
	
\subsection{Conclusion}

Once Lemma \ref{lem:salmone} is proven, Theorem \ref{thm:thm-1} follows from the following 
standard argument.

\begin{proof}[Proof of Theorem \ref{thm:thm-1}]
Assume that \eqref{eqn:system} is locally controllable. Define the relation $\sim$ on $M$ by saying that $x\sim y$ if and only if $x\in \A_{y}$. 
Thanks to Lemma~\ref{lem:salmone}, $\sim$ is an equivalence relation.
Due to the local controllability, the equivalence classes are open. Each class is also closed, since its complement is the union of the other classes, and such an union is open. 
Due to the connectedness of $M$, there is only one class and system \eqref{eqn:system} is controllable.
\end{proof}

\appendix
\section{A simpler proof 
	when \protect\LLC-local controllability holds.}
\label{app}

In this {appendix we give} a simpler proof of the  controllability of \LLC-locally controllable systems. 
We first observe the following. 
\begin{proposition}\label{prop:LLC}
If system  \eqref{eqn:system} is \LLC-locally controllable, then for any point $x\in M$ the set $\A_{x}^{-}$ has nonempty interior.
\end{proposition}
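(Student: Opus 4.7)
My plan is to mirror, in a simpler form, the iterative flow-box construction used in the proof of Lemma~\ref{lem:salmone}, exploiting the strength of \LLC-local controllability. The goal is to build an $n$-dimensional embedded submanifold of $M$ contained in $\A_x^-$; such a submanifold is automatically open in $M$.

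First I fix a precompact neighborhood $\Omega$ of $x$ and use \LLC at $x$ to obtain an open neighborhood $V\subset\A_{x,\Omega}$ of $x$. Then I build inductively, for $k=1,\ldots,n=\dim M$, embedded $k$-dimensional parametrizations $C_k\colon I_k\to M$ with $C_k(I_k)\subset V\cap\A_x^-$. For the base case $k=1$: \LLC at $x$ forces the existence of some $v_1\in U$ with $f_1:=F(\cdot,v_1)$ non-zero at $x$ (otherwise $\A_x$ would collapse to $\{x\}$, contradicting that it is a neighborhood of $x$). The curve $C_1(t)=e^{-tf_1}(x)$ on $I_1=(0,\delta_1)$ is an embedded curve, lying in $V$ for $\delta_1$ small enough, and each $C_1(t)$ belongs to $\A_x^-$ since applying the constant control $v_1$ for time $t$ steers $C_1(t)$ back to $x$.

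For the inductive step with $k<n$, the crux is to produce $p\in C_k(I_k)$ and $v_{k+1}\in U$ such that $F(p,v_{k+1})\notin T_pC_k(I_k)$. If no such pair existed, then every admissible vector field would be tangent to $C_k(I_k)$ along all of $C_k(I_k)$, and hence, by uniqueness of solutions, every trajectory of \eqref{eqn:system} originating in $C_k(I_k)$ would remain on it; in particular, for each $q\in C_k(I_k)$ the attainable set $\A_q$ would be contained in the $k$-dimensional submanifold $C_k(I_k)$. This would contradict \LLC at $q$, which provides an open $n$-dimensional neighborhood of $q$ inside $\A_q$. Once $p$ and $v_{k+1}$ are in hand, I set $f_{k+1}=F(\cdot,v_{k+1})$ and define
\[
C_{k+1}(s,t)=e^{-tf_{k+1}}(C_k(s))
\]
on a small open neighborhood $I_{k+1}\subset I_k\times(0,\delta_{k+1})$ of a preimage of $p$. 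The transversality of $f_{k+1}(p)$ to $T_pC_k(I_k)$ makes $C_{k+1}$ an embedding; shrinking $I_{k+1}$ keeps its image in $V$; and each $C_{k+1}(s,t)$ reaches $x$ by first applying $v_{k+1}$ for time $t$ (arriving at $C_k(s)\in\A_x^-$) and then using a control that steers $C_k(s)$ to $x$. Hence $C_{k+1}(I_{k+1})\subset V\cap\A_x^-$. Iterating to $k=n$, the set $C_n(I_n)$ is an $n$-dimensional embedded submanifold of $M$ contained in $\A_x^-$, hence open.

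The step I expect to be most delicate is the transversality argument: one must carefully justify that tangency of every $F(\cdot,v)$ to $C_k(I_k)$ really forces all admissible trajectories---not only those driven by piecewise constant controls---to remain on the $k$-dimensional submanifold. For piecewise constant controls this is immediate from invariance of $C_k(I_k)$ under each individual flow; for general $L^\infty$ controls, it can be obtained by approximation combined with continuous dependence on the control, or alternatively by reading the dynamics in a chart adapted to $C_k(I_k)$.
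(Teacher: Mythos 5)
Your construction is the same Krener-type iteration as the paper's proof (backward flows of constant-control vector fields building up embedded submanifolds of increasing dimension inside $\A_x^-$, until an open set is reached), and the base case and the passage from $C_k$ to $C_{k+1}$ are fine. There is, however, a genuine gap in the inductive step, and it sits exactly where the \emph{localized} hypothesis must be used. You claim that if every $F(\cdot,v)$ is tangent to $C_k(I_k)$ along $C_k(I_k)$, then every trajectory originating in $C_k(I_k)$ remains on it, so that $\A_q\subset C_k(I_k)$ for $q\in C_k(I_k)$, contradicting $q\in\interior\A_q$. This containment is false: $C_k(I_k)$ is an embedded but non-closed submanifold, and a tangent trajectory can leave it in finite time through its frontier. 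Already for $k=1$ the flow of $f_1$ itself is tangent to the arc $C_1(I_1)=\{e^{-tf_1}(x):t\in(0,\delta_1)\}$, yet it carries any point of the arc past the endpoint $x$ and out of $C_1(I_1)$. So tangency of all admissible fields does not force $\A_q\subset C_k(I_k)$, and the contradiction you derive — which as written only uses plain local controllability at $q$ — does not follow. (If it did, the proposition would hold under mere local controllability by this easy argument, whereas for that weaker hypothesis the paper needs the much harder Lemma~\ref{lem:salmone}.)

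The repair is precisely the paper's: pick a point $q\in C_k(I_k)$ and a neighborhood $V_1$ of $q$ such that $C_k(I_k)\cap V_1$ is relatively closed in $V_1$ (possible because embedded submanifolds are locally closed; the paper takes $V_1$ around the midpoint of the arc, excluding $x$ and $e^{-\delta f_1}(x)$). Then a trajectory issued from $q$ that \emph{stays in $V_1$} and is everywhere tangent to $C_k(I_k)$ cannot quit $C_k(I_k)\cap V_1$, so $\A_{q,V_1}\subset C_k(I_k)\cap V_1$, which contradicts $q\in\interior\A_{q,V_1}$ — and this last membership is exactly what \LLC-local controllability provides. You correctly flagged the $L^\infty$-versus-piecewise-constant subtlety in the tangency argument (handled by an adapted chart, as you suggest), but the frontier-escape issue is the one that actually breaks the step as written.
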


A proof of this proposition can be found below. 
However, observe that  Proposition~\ref{prop:LLC} can be directly deduced from \cite[Theorem 5.3]{Grasse1992}, since the property of localized local controllability implies, in the terminology of \cite{Grasse1992}, that 
\eqref{eqn:system} has the 
{nontangency property}.

\begin{proof}
The argument mimics the proof of Krener's theorem \cite{krener1974generalization}.
Fix $x$ in $M$. We claim that there exists  $f_{1}\in \F$ such that $f_{1}(x)\neq 0$. 
Indeed, if that were not the case, any solution $\phi(\cdot, x, u)$ with $u\in \U$ would be constant. 
Let 
\[
N_{1}=\{e^{-tf_{1}}(x)\mid t\in (0,\delta)\}
\]
for $\delta>0$.
If $M$ is one-dimensional, then we have concluded.
Otherwise, we claim that there exist $y_1\in N_1$ 
and $f_{2}\in \F$  such that $f_{1}(y_1)
$ and $f_{2}(y_1)
$ are linearly independent.
Indeed, let $V_{1}$ be a neighborhood of $e^{-\frac{\delta}{2} f_{1}}(x)$
not containing $x$ nor $e^{-\delta f_{1}}(x)$
and assume that every $f\in \F$ is tangent to $N_1\cap V_1$.  Then the trajectories of \eqref{eqn:system} starting from $N_1\cap V_1$
and staying in $V_1$ cannot quit $N_1\cap V_1$. This contradicts the localized local controllability property.

Thus, define the embedded two-dimensional submanifold
	\[
	N_{2} =\{e^{-t_{2}f_{2}}\circ e^{-t_1f_{1}}(x)\mid (t_{1},t_{2})\in I_2\times (0,\delta_2)\},
	\]
	for a suitable nonempty open subinterval $I_2$ of $(0,\delta)$ and a suitable
	$\delta_2>0$.
If the dimension of $M$ is equal to $2$ the proof is concluded, otherwise, reasoning as above, 
there exist $y_2=\in N_2$ and $f_3\in\F$ such that $f_3(y_2)$ is transverse to $N_2$, 	  i.e., 
	\begin{equation*} 
	\dim (\spa\{f_3(y_2)\}+ T_{y_2}N_{2}) =\dim(N_2)+1=3.
\end{equation*}
Hence, the differential of the map $(t_1,t_2,t_3)\mapsto e^{-t_{3}f_{3}}\circ e^{-t_{2}f_{2}}\circ e^{-t_1f_{1}}(x)$ has full rank in a neighborhood of $(\bar t_1,\bar t_2,0)$, where $\bar t_1,\bar t_2$ are such that $y_2=e^{-\bar t_{2}f_{2}}\circ e^{-t\bar f_{1}}(x)$. We can then iterate the construction up to reaching the dimension of $M$. 
\end{proof}

{
Now due to Lemma~\ref{lem:approx}, \LLC-locally controllable systems are approximately controllable. Moreover, Proposition~\ref{prop:LLC} ensures that any point $x$ in $M$ satisfies $\interior \A^{-}_{x}\neq \emptyset$.
Thus, by Remark~\ref{rmk:open}, we deduce that any $x$ in $M$ can be reached from any other point.
}

\bibliographystyle{alphaabbrv}
\bibliography{biblio-v-UU}

\end{document}